\newcommand{\qed}{$\;\;\;\Box$}
\newenvironment{proof}{\par\smallbreak{\sl\bf Proof.~}}
{\unskip\nobreak\hfill \qed \par\medbreak}
\newcounter{claim}
\renewcommand{\theclaim}{\arabic{claim}}
{\par\medskip\par}
\newcommand{\N}{{\mathbb N}}
\newcommand{\R}{{\mathbb R}}
\newcommand{\beq}{\begin{equation}}
\newcommand{\ee}{\end{equation}}
\renewcommand{\d}{\partial}
\newtheorem{thm}{Theorem}[section]
\newtheorem{lem}[thm]{Lemma}
\newtheorem{defn}[thm]{Definition}
\newtheorem{ex}[thm]{Example}
\newcommand{\be}{\beta}
\newcommand{\ga}{\gamma}
\newcommand{\om}{\omega}
\newcounter{e}
\newcommand{\reff}[1]{(\ref{#1})}
\date{}
\title{Fredholm solvability of time-periodic boundary value hyperbolic problems} 
\newcounter{thesame}
\author{
I. Kmit
\ \ \ R. Klyuchnyk\\
{\small
Institute of Mathematics, Humboldt University of Berlin,}
\\
{\small Rudower Chaussee 25,
\small D-12489 Berlin, Germany and
}
\\
{\small
Institute for Applied Problems of Mechanics and Mathematics,
}
\\
{\small
Ukrainian Academy of Sciences,
Naukova St.\ 3b,
79060 Lviv, Ukraine
}
\\
{\small   E-mail:
{\tt kmit@informatik.hu-berlin.de}}
\\[5mm]
{\small
Institute for Applied Problems of Mechanics and Mathematics,
}
\\
{\small
Ukrainian Academy of Sciences,
Naukova St.\ 3b,
79060 Lviv, Ukraine
}
\\
{\small   E-mail:
{\tt roman.klyuchnyk@gmail.com}}
}
\begin{document}

\maketitle

\begin{abstract}
We investigate a large class of linear boundary value problems for the general first-order one-dimensional hyperbolic systems in the strip $[0,1]\times\R$. We state rather broad natural 
conditions on the data under which the operators of the problems satisfy the Fredholm alternative
 in the spaces of continuous and time-periodic functions. A crucial ingredient of our analysis is a non-resonance condition,
which is  formulated in terms of the data responsible for the bijective part of the 
Fredholm operator.
In the case  of $2\times 2$  systems with reflection boundary conditions,
we provide a criterium for the non-resonant behavior of the system.
\end{abstract}

{\it Keywords:} first-order hyperbolic systems, periodic conditions in time,
boundary conditions in space,
non-resonance conditions, Fredholm alternative


\section{Introduction}\label{sec:intr} 

\renewcommand{\theequation}{{\thesection}.\arabic{equation}}
\setcounter{equation}{0}

\subsection{Motivation}\label{sec:motiv}

We investigate the general linear first-order hyperbolic system in a single space variable

 \begin{equation}
 \partial_{t}u_j
 +a_j(x,t)\partial_{x}u_j
 +\sum_{k=1}^{n}b_{jk}(x,t)u_k
 =f_j(x,t),
 \;\;\; (x,t)\in(0,1)\times\R,\;\;\; j\le n,
 \label{f1}
 \end{equation}
subjected to periodic conditions in time 
 \begin{equation}\label{f1*}
 u_{j}(x,t)=u_{j}(x,t+2\pi), \;\;\; j\le n, \; t\in\R
 \end{equation}
and boundary conditions in space
  \beq\label{f2}
 \begin{array}{ll}
   u_{j}(0,t)=
 (Ru)_j(t),
 \;\;\; 1\le j\le m,\; t\in \R,  \\ [2mm]
  u_{j}(1,t)=
 (Ru)_j(t),
 \;\;\; m< j\le n,\; t\in \R,
 \end{array}
 \ee
where $0\le m\le n$ are positive integers and $R=(R_1,...,R_n)$ is a bounded linear operator.

From the physical point of view (see Examples \ref{thm:ex3}--\ref{thm:ex5} in Section \ref{sec:ex}), systems of the type \reff{f1}--\reff{f2}
describe models
of laser dynamics \cite{LiRadRe,Peterhof,RadWu,Sieber},  chemical kinetics \cite{aris1,Lyu,zel},
and population dynamics  \cite{eftimie,HRL}.
These systems also have applications in the area of optimal 
boundary control problems \cite{coron,lakra}.

From the mathematical point of view, there is a need for developing  a  theory of
local smooth continuation  \cite{KmRe4} and
 bifurcation \cite{KmRe3} for Fredholm hyperbolic operators, in particular, 
such tools as Lyapunov-Schmidt reduction. Another  source of our  motivation
is developing a stability theory of time-periodic solutions to hyperbolic PDEs,
 in particular, 
such tools as exponential dichotomies. Note that  the known
 theorems about exponential dichotomies 
for ODEs and abstract evolution equations (see, e.g., \cite{LatPS,pal1,pal2})
are stated  in terms of Fredholm solvability. For hyperbolic operators, even
proving a Fredholm property is  a nontrivial issue, and this is the subject that we consider 
in the present paper.

A particular case 
of \reff{f1}--\reff{f2} is studied in \cite{Kirilich}, where an existence result is obtained for 
solutions in the space of continuous and periodic in $t$ functions.
Specifically,  the authors  consider the system \reff{f1}, \reff{f1*} with the boundary conditions
\beq\label{f1100}
\begin{array}{ll}
u_j(0,t)=\mu_j(t), \; 1\le j\le m,  \\ [2mm]
u_j(1,t)=\mu_j(t), \; m<j\le n,
\end{array}
\ee
where $\mu_j(t)$ are time-periodic. An essential assumption made in  \cite{Kirilich} 
is the smallness of all $b_{jk}$. It comes from the 
Banach fixed point argument used in the proof of the main result. 
In the present paper we do not need this assumption and
allow  $b_{jk}$ to be arbitrary elements of the space of continuous and time-periodic functions.
Our main assumption, which is the non-resonance condition \reff{f14} 
stated in Section \ref{sec:setting}, is fulfilled in the setting of \cite{Kirilich}
(this is easy to see after the changing of variables 
$u_j\to v_j=u_j-\mu_j(t)$).

 Time-periodic solutions to the system \reff{f1}
with some reflection boundary conditions are investigated in \cite{KR2,KR3}.
These papers suggest a rather general approach  to proving the Fredholm alternative
in the scale of Sobolev-type spaces of time-periodic functions 
(in the autonomous case  \cite{KR2})
and in the space of continuous and time-periodic functions 
(in the non-autonomous case  \cite{KR3}). In the present paper, we extend the approach from
 \cite{KR3} to a quite general boundary operator $R$ which covers periodic boundary conditions as well as boundary conditions with delays.

\subsection{Our contribution
}\label{sec:setting}

By $C_{n,2\pi}$ 
we  denote the vector space of all $2\pi$-periodic in $t$ and continuous maps $u:[0,1]\times\R\to\R^{n}$, with the norm
 $$
 \|u\|_{\infty}=\max_{j\le n}\max_{x\in [0,1]}\max_{t\in\R}|u_j|.
 $$
 Similarly, $C_{n,2\pi}^1$ denotes the Banach space of all $u\in C_{n,2\pi}$ such that $\d_xu,\d_tu\in C_{n,2\pi}$, with the norm 
 $$
\|u\|_{1}=\|u\|_{\infty}+\|\partial_x u\|_{\infty}+\|\partial_t u\|_{\infty}.
 $$
Also, we use the notation $C_{n,2\pi}(\R)$ for the space of all continuous and $2\pi$-time-periodic maps $v:\R\to\R^n$ and the notation $C_{n,2\pi}^1(\R)$ for the space of all $v\in C_{n,2\pi}(\R)$ with $v^\prime\in C_{n,2\pi}(\R)$. For simplicity, we will skip the subscript $n$ if $n=1$ and write simply $C_{2\pi}$ for $C_{1,2\pi}$ (similarly, we will write $C^1_{2\pi}$, $C_{2\pi}(\R)$, $C^{1}_{2\pi}(\R)$ for $C^{1}_{1,2\pi}$, $C_{1,2\pi}(\R)$, $C_{1,2\pi}^{1}(\R)$, respectively).

We make the following assumptions on the coefficients of  \reff{f1}:
 \beq\label{f4}
 a_j, b_{jk}\in C^1_{2\pi} \mbox{ for all } j\le n \mbox{ and } k\le n,
 \ee
\beq\label{f5}
 a_j(x,t)\neq0  \mbox{ for all } (x,t)\in[0,1]\times\R  \mbox{ and }  j\le n,
\ee
 and 
\beq\label{fz8}
\mbox{for all } 1\le j\neq k\le n \mbox{ there exists } \tilde{b}_{jk}\in C^1_{2\pi}  \mbox{ such that }
b_{jk}=\tilde{b}_{jk}(a_k-a_j).
\ee
The operator   $R$ is supposed to be  a bounded linear operator from $C_{n,2\pi}$ to $C_{n,2\pi}(\R)$
satisfying the following condition: 
\beq\label{ft7}
\begin{array}{ll}
 \mbox{the restriction of the operator } R \mbox{ to } C_{n,2\pi}^1
 \\  
 \mbox{is a bounded linear operator from } C_{n,2\pi}^1 \mbox{ to } C_{n,2\pi}^{1}(\R).
 \end{array}
 \ee

Our goal is to prove the Fredholm alternative for \reff{f1}--\reff{f2}. 
More specifically, we intend to show that, under a certain
 non-resonance condition on 
the coefficients $a_j$, $b_{jj}$, and the boundary operator $R$, either the space of nontrivial solutions to \reff{f1}--\reff{f2} with $f=(f_1,...,f_n)=0$ is not empty and has finite dimension or 
 the system \reff{f1}--\reff{f2} has a unique solution for any $f$.

Let us introduce the characteristics of the hyperbolic system \reff{f1}. Given $j\le n$, $x\in[0,1]$, and $t\in\R$, the $j$-th characteristic is defined as the solution $\xi\in[0,1]\mapsto\omega_j(\xi,x,t)\in\R$ of the initial value problem

\beq\label{f7}
\d_{\xi}\omega_{j}(\xi,x,t)=\frac{1}{a_j(\xi,\omega_{j}(\xi,x,t))}, \;\;\; \omega_{j}(x,x,t)=t.
\ee
To shorten notation, we will simply write $\omega_j(\xi)=\omega_j(\xi,x,t)$. Set 
\beq\label{f8}
c_j(\xi,x,t)=
\exp{{\int_{x}^{\xi}\left (\frac{b_{jj}}{a_j}\right )(\eta,\omega_j(\eta)) d\eta}}, \;\;\;
d_j(\xi,x,t)=
\frac{c_j(\xi,x,t)}{a_j(\xi,\omega_j(\xi))}.
\ee
Integration along the characteristic curves brings the system \reff{f1}--\reff{f2} to the integral form
\begin{eqnarray}
\lefteqn{
 u_j(x,t)=c_j(0,x,t)(Ru)_j(w_j(0))}\nonumber\\ [2mm] &&
 -\int_{0}^{x}d_j(\xi,x,t)\sum_{k\neq j} b_{jk}(\xi,\omega_j(\xi))u_k(\xi,\omega_j(\xi)) d\xi
 +\int_{0}^{x}d_j(\xi,x,t)f_j(\xi,\omega_j(\xi)) d\xi,\nonumber
 \\
&& \hskip10cm 1\le j\le m, \label{f10}
 \\
\lefteqn{
  u_j(x,t)=c_j(1,x,t)(Ru)_j(w_j(1))}\nonumber
 \\ [2mm] &&
 -\int_{1}^{x}d_j(\xi,x,t)\sum_{k\neq j} b_{jk}(\xi,\omega_j(\xi))u_k(\xi,\omega_j(\xi)) d\xi
 +\int_{1}^{x}d_j(\xi,x,t)f_j(\xi,\omega_j(\xi)) d\xi,  \nonumber
 \\
 && \hskip10cm m<j\le n.
 \label{f11}
\end{eqnarray}
By straightforward calculation, one can easily show that a $C^1$-map $u:[0,1]\times\R\to\R^{n}$ is a solution to the PDE problem \reff{f1}--\reff{f2} if and only if it satisfies the system \reff{f10}--\reff{f11}.
This motivates the following definition.

\begin{defn}
A function $u\in C_{n,2\pi}$ is called a {\rm continuous solution} to \reff{f1}--\reff{f2} if it satisfies \reff{f10} and \reff{f11}.
\end{defn}

Introduce an operator $C\in \mathcal{L}(C_{n,2\pi})$ by 
\beq\label{f12}
 (Cv)_j(x,t)=\left\{
 \begin{array}{rl}
c_j(0,x,t)(Rv)_j(\omega_j(0)) &\mbox{for}\ 1\le j\le m,\\
c_j(1,x,t)(Rv)_j(\omega_j(1)) &\mbox{for}\ m<j\le n.
\end{array}
\right.
 \ee

\begin{thm}\label{thm:th12}
Suppose that the conditions \reff{f4}--\reff{ft7} are fulfilled. Assume that there exists $\ell\in\N$ such that
\beq\label{f14}
\|C^{\ell}\|_{\mathcal{L}(C_{n,2\pi})}<1,
\ee
for the operator $C$  defined by \reff{f12}.
Let $\mathcal{K}$ denote the vector space of all continuous solutions to \reff{f1}--\reff{f2} with $f=0$.
Then

$(i)$ $\dim \mathcal{K}<\infty$ and the vector space of all $f\in C_{n,2\pi}$ such that there exists a continuous solution to \reff{f1}--\reff{f2} is a closed subspace of codimension $\dim \mathcal{K}$ in $C_{n,2\pi}$.

$(ii)$ If $\dim \mathcal{K}=0$, then for any $f\in C_{n,2\pi}$ there exists a unique continuous solution $u$ to \reff{f1}--\reff{f2}.
\end{thm}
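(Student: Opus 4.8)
The plan is to recast the integral system \reff{f10}--\reff{f11} as a single operator equation in the Banach space $C_{n,2\pi}$ and then bring the Riesz--Schauder theory of compact operators to bear. Writing the right-hand sides of \reff{f10}--\reff{f11} as the sum of the boundary-transport operator $C$ of \reff{f12}, the off-diagonal integral operator
\[
(Bu)_j(x,t)=-\int_{x_0^j}^{x}d_j(\xi,x,t)\sum_{k\neq j}b_{jk}(\xi,\omega_j(\xi))u_k(\xi,\omega_j(\xi))\,d\xi
\]
(with $x_0^j=0$ for $j\le m$ and $x_0^j=1$ for $j>m$), and the forcing operator $\Phi$ collecting the $f_j$-integrals, the notion of continuous solution becomes $u=Cu+Bu+\Phi f$, i.e. $\A u=\Phi f$ with $\A:=I-C-B\in\mathcal L(C_{n,2\pi})$. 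Here $\mathcal K=\{u:u=Cu+Bu\}=\ker\A$. Thanks to $2\pi$-periodicity in $t$ all these operators act on functions over the compact domain $[0,1]\times(\R/2\pi\Z)$, which is what will make Arzel\`a--Ascoli available.

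First I would establish invertibility of $I-C$. Since $\|C^{\ell}\|_{\mathcal L(C_{n,2\pi})}<1$ by \reff{f14}, the spectral radius satisfies $r(C)\le\|C^{\ell}\|^{1/\ell}<1$, so $1$ lies in the resolvent set and $(I-C)^{-1}=\sum_{i\ge0}C^{i}$ converges in $\mathcal L(C_{n,2\pi})$. Consequently $\A=(I-C)\bigl(I-(I-C)^{-1}B\bigr)$, and everything reduces to the compactness of $B$.

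The compactness of $B$ is the crux, and here the structural hypothesis \reff{fz8} is essential; I expect this to be the main obstacle. Substituting $b_{jk}=\tilde b_{jk}(a_k-a_j)$, the integrand of $(Bu)_j$ carries the factor $(a_k-a_j)(\xi,\omega_j(\xi))$, which measures the transversality of the $j$-th and $k$-th characteristic families (their inverse speeds differ by $(a_k-a_j)/(a_ja_k)$). This lets one integrate by parts in $\xi$, rewriting the integrand up to smooth factors as a total $\xi$-derivative and thereby replacing the merely continuous trace $u_k(\xi,\omega_j(\xi))$ by a time-primitive of $u_k$ plus boundary terms. The primitive is Lipschitz in $(x,t)$ with a bound depending only on $\|u\|_{\infty}$, so the family $\{Bu:\|u\|_{\infty}\le1\}$ is bounded and equicontinuous, and Arzel\`a--Ascoli yields that $B$ is compact. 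The delicate point is that this integration by parts must be carried out without invoking any differential equation for $u$, since compactness is needed on all of $C_{n,2\pi}$ and not only on solutions; \reff{fz8} is exactly what supplies the missing smoothness to compensate the $(x,t)$-dependence of the characteristic $\omega_j(\xi,x,t)$ sitting inside the trace of $u_k$.

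With $K:=(I-C)^{-1}B$ compact, $I-K$ is Fredholm of index zero by Riesz--Schauder, and since $I-C$ is an isomorphism, $\A=(I-C)(I-K)$ is Fredholm of index zero as well. Hence $\dim\mathcal K=\dim\ker\A<\infty$ and $\im\A$ is closed of codimension $\dim\mathcal K$. For (ii), if $\dim\mathcal K=0$ then $\A$ is injective and, being index zero, bijective; by the bounded inverse theorem $u=\A^{-1}\Phi f$ is the unique continuous solution for every $f$. For the codimension statement in (i) it remains to pass from $\im\A$ to the solvable set $\{f:\Phi f\in\im\A\}=\Phi^{-1}(\im\A)$, which is closed since $\Phi$ is bounded and $\im\A$ is closed. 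Its codimension equals $\dim\mathcal K$ provided the $\dim\mathcal K$ cokernel functionals of $\A$ stay linearly independent after composition with $\Phi$, equivalently $\im\Phi+\im\A=C_{n,2\pi}$; I would verify this from the Volterra structure of $\Phi$ together with the non-vanishing \reff{f5} of the $a_j$ (which make $\Phi$ injective and its range interior-filling), so that no nonzero solution of the homogeneous adjoint problem annihilates $\im\Phi$. This last bookkeeping is the only other point that requires care.
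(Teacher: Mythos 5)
There is a genuine gap at the crux of your argument: the operator $B$ is \emph{not} compact on $C_{n,2\pi}$, and the integration by parts you describe cannot be carried out for $B$ itself. In $(Bu)_j$ the integrand contains the trace $u_k(\xi,\omega_j(\xi,x,t))$ with the integration variable $\xi$ sitting in \emph{both} slots of $u_k$, so $\frac{d}{d\xi}u_k(\xi,\omega_j(\xi))=\partial_1u_k+\frac{1}{a_j}\partial_2u_k$; the transversality factor $\frac{1}{a_j}-\frac{1}{a_k}$ never appears, and the uncontrollable $x$-derivative $\partial_1u_k$ does. Condition \reff{fz8} therefore supplies nothing here. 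A concrete counterexample: take $n=2$, $a_1\equiv1$, $a_2\equiv-1$, $b_{12}=-2\tilde b_{12}$ with $\tilde b_{12}$ smooth and nonvanishing (so \reff{fz8} holds), and feed in $u=(0,v(t-x))$ with $v$ ranging over the unit ball of $C_{2\pi}(\R)$; since $\omega_1(\xi,x,t)=t+\xi-x$, one gets $(Bu)_1(x,t)=-v(t-x)\int_0^x d_1b_{12}\,d\xi$, a non-precompact family. So $B$, hence $K=(I-C)^{-1}B$, is not compact and Riesz--Schauder does not apply directly.

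The fix — and the route the paper takes — is to show instead that $K^2$ is compact and invoke Nikolsky's criterion ($I+K$ is Fredholm of index zero whenever $K^2$ is compact, \cite[Theorem XIII.5.2]{KA}); this reduces, via the Neumann series for $(I-C)^{-1}$, to the compactness of $B^2$ and $BC$. There your mechanism genuinely works: in $(B^2u)_j$ the innermost trace is $u_l(\eta,\omega_k(\eta,\xi,\omega_j(\xi)))$, whose first slot $\eta$ is frozen in the $\xi$-integral, so $\frac{d}{d\xi}$ produces exactly the factor $\bigl(\frac{1}{a_j}-\frac{1}{a_k}\bigr)\partial_t\omega_k\,\partial_2u_l$, which \reff{fz8} cancels against $b_{jk}$ and permits integration by parts in $\xi$; the same happens for $BC$ using \reff{ft7}. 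The paper then concludes compactness by showing $B^2$ and $BC$ map $C_{n,2\pi}$ boundedly into $C^1_{n,2\pi}$ (Lemma \ref{lem:l3} plus density of $C^1_{n,2\pi}$) and applying Arzel\`a--Ascoli. Your remaining steps (invertibility of $I-C$ from \reff{f14} via the spectral radius, the factorization $I-C-B=(I-C)(I-K)$, and the passage to parts (i)--(ii)) are sound; your closing concern about identifying the codimension of the solvable set of $f$'s with $\dim\mathcal K$ is a legitimate point that the paper itself passes over by identifying the theorem with the index-zero Fredholmness of $I-C-B$, but it is secondary to the compactness issue above.
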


In Section \ref{sec:remarks} we comment about our crucial conditions \reff{fz8} and \reff{f14} and give examples of the practical cases 
of the problem \reff{f1}, \reff{f2} related to real life applications.
Theorem \ref{thm:th12} 
is proved in Section \ref{sec:Fredh}.  Moreover, in
Section \ref{sec:refl} we consider the case of reflection boundary conditions and
provide  non-resonance conditions that are broader 
than \reff{f14}. 
In the particular case of only two equations in the hyperbolic system \reff{f1},
we derive a necessary and sufficient non-resonance condition, which is stable with 
respect to data perturbations.

\subsection{Further comments}\label{sec:remarks}

\subsubsection{Examples related to applications}\label{sec:ex}

\begin{ex}\label{thm:ex3}\rm
{\it Chemical kinetics.}
The paper \cite{zel} discusses catalytic processes in a chemical reactor. 
A reaction has first order if the reaction rate linearly depends
on the amount of  reactants. In the presence of a catalyst and
 the internal heat exchange, such reactions are described by the following boundary value
problem for a
$3\times 3$-semilinear hyperbolic system:
\beq\label{ex2chem}
\begin{array}{rcl}
\be u_t+u_x&=&KQe^u(1-x)-\ga(u-v),\\
v_t-v_x&=&\ga(u-v),\\
w_t-w_x&=&K(1-x),\\[3mm]
u(0,t)&=&v(0,t),\\
v(1,t)&=&h(t),\\
w(0,t)&=&0,
\end{array}
\ee
where $u$ denotes the temperature in the reactor, 
$v$ is the temperature in the  refrigerator and 
$w$ is the concentration of the reactant.
The positive constants $\ga$, $K$, $\be$, and $Q$ characterize
a catalyst and a reactant.

It is easy to see that  linearizations of \reff{ex2chem} 
are particular cases of \reff{f1}, \reff{f2}.
\end{ex}

\begin{ex}\label{thm:ex4}\rm
{\it Chemotaxis.}
The following correlated random walk model for chemotaxis (chemosensitive movement, see \cite{HRL}) consists of the hyperbolic system 
\beq\label{ex3chem}
\begin{array}{rcl}
\partial_t u^{+}+\partial_x(a_1(x)u^{+})&=&-\mu_1(x)u^{+}+\mu_2(x)u^{-},\\
\partial_t u^{-}-\partial_x(a_2(x)u^{-})&=&-\mu_2(x)u^{-}+\mu_1(x)u^{+}
\end{array}
\ee
and  the boundary conditions 
$$
a^+(x)u^+(x,t) = a^-(x)u^-(x,t),\quad x=0,1,
$$
of the type \reff{f2}.
Here $u^{+}$ and $u^{-}$ are the densities for right and left moving particles.
Furthermore, $\mu_1$, $\mu_2$ are the turning rates and $a_1$, $a_2$ are the particle speeds 
that depend on the external signal $x$.
\end{ex}

\begin{ex}\label{thm:ex5}\rm
{\it Laser dynamics.}
The dynamic behavior of distributed feedback multisection semiconductor lasers is
represented by means of traveling wave models, describing the forward and backward 
propagating complex amplitudes of the light $u=(u_1,u_2)$. The model consists of a 
hyperbolic system coupled to an equation for the carrier density $v$,
namely
$$
\begin{array}{rcl}
\partial_tu(x,t)&=&(-\partial_{x}u_1(x,t),\partial_xu_2(x,t))+G(x,u(x,t),v(x,t)), \nonumber \\
\partial_tv(x,t)&=&\displaystyle I(x,t)+H(x,u(x,t),v(x,t))\nonumber \\
&&\displaystyle 
+\sum_{k=1}^{m}b_{k}\chi_{S_k}(x)\left(\frac{1}{x_k-x_{k-1}}\int_{S_k}v(y,t)dy-v(x,t)\right), \nonumber
\end{array}
$$
which is supplemented with the reflection boundary conditions
$$
\begin{array}{rcl}
u_1(0,t)&=&r_0u_2(0,t)+\alpha(t), \nonumber \\
u_2(1,t)&=&r_{1}u_1(1,t).   \nonumber
\end{array}
$$
Here $0<r_0<1$ and $0<r_1<1$  are reflection coefficients.
This model describes the longitudinal dynamics of edge emitting lasers \cite{LiRadRe}.
A linearization of the  main, hyperbolic part of the model is covered by our system \reff{f1}, \reff{f2}.
\end{ex}

\subsubsection{About the non-resonance condition \reff{f14}}

Suppose that there is $\ell\in\N$ such that $C^{\ell}=0$ in $C_{n,2\pi}$. 
Such boundary conditions appear, for example, in optimal 
 boundary control problems \cite{lakra} and chemical kinetics~\cite{zel}; they are 
\emph{smoothing} in the sense of  \cite{km1,km2,Moyda}.
The condition \reff{f14} is satisfied by trivial reasons in this case,
 and the system \reff{f1}--\reff{f2} is non-resonant.
Even   this case shows that the assumption of Theorem \ref{thm:th12}, involving
the existence of a suitable degree $\ell$, is broader than the condition 
$\|C\|_{\mathcal{L}(C_{n,2\pi})}<1$ (corresponding to $\ell=1$). Indeed,
it is easy to see that, for each $\ell>1$, there is an operator $C$ such that
$C^\ell=0$ while \reff{f14} is not true for any smaller value of~$\ell$.
One can easily check that this is exactly the case for the problem from chemical kinetics
\reff{ex2chem} with $l=2$. Specifically, for the linearization of \reff{ex2chem} at a stationary solution
$(u,v,w)=(u_0(x),v_0(x),w_0(x))$ we have
$$
\begin{array}{rcl}
 (C(u,v,w))_1(x,t)&=&\displaystyle\exp\left\{\be\int_x^0b_{11}(\eta)\,d\eta\right\}v(0,-\be x+t),\\
 (C(u,v,w))_2(x,t)&=&0,\\
 (C(u,v,w))_3(x,t)&=&0,
\end{array}
$$
where $b_{11}(x)=-KQe^{u_0(x)}(1-x)$. Evidently, $C^2=0$.

Consider now practical sufficient conditions making the assumption
\reff{f14} true for small $\ell$. For $\ell=1$ such a condition is
\beq\label{f14RR}
\|R\|_{\mathcal{L}(C_{n,2\pi})}\max_{j,x,t}\exp{\int_{x}^{x_j}\left (\frac{b_{jj}}{a_j}\right )(\eta,\omega_j(\eta)) d\eta}<1.
\ee
This easily follows from  \reff{f12}.

Now consider \reff{f14} for $\ell=2$. Using the notation 
\begin{equation}\label{xj}
 x_j=\left\{
 \begin{array}{rl}
 0 &\mbox{if}\ 1\le j\le m,\\
 1 &\mbox{if}\ m< j\le n
\end{array}
\right.
\end{equation}
and the definition \reff{f12} of the operator $C$, we have
\beq\label{f14CCC}
(C^{2}u)_{j}(x,t)=c_j(x_j,x,t)(RCu)_j(\omega_{j}(x_j)),
\ee
where
\begin{eqnarray*}
(RCu)_j(\omega_{j}(x_j))
=\left(R[c_1(x_1,x,t)(Ru)_1(\omega_{1}(x_1)),...,c_n(x_n,x,t)(Ru)_n(\omega_{n}(x_n))]\right)_j\left (\omega_{j}(x_j)\right ).
\end{eqnarray*}
Therefore, the condition  $\|C^2\|_{\mathcal{L}(C_{n,2\pi})}<1$ follows from
\begin{equation}\label{vghv}
\|RC\|_{\mathcal{L}(C_{n,2\pi})}\max\limits_{j,x,t}\exp{\int_{x}^{x_j}\left(\frac{b_{jj}}{a_j}\right)(\eta,\omega_{j}(\eta))d\eta}<1.
\end{equation}
There are simple examples when \reff{vghv} is true while \reff{f14RR} is not.

\subsubsection{About the conditions \reff{fz8}}
The following two examples show that the condition
 \reff{fz8} plays a crucial role for our result.
\begin{ex}\label{thm:ex1}\rm
Consider the $2\times 2$-system
\begin{equation}\label{one2pi}
 \begin{array}{cc}
\displaystyle\partial_tu_1+\frac{1}{2\pi}\partial_xu_1-u_2=0, & \\ [2mm]
 \displaystyle\partial_tu_2+\frac{1}{2\pi}\partial_xu_2+u_1=0,  &
\end{array}
\end{equation} 
with periodic conditions in both $t$ and $x$, namely
\begin{eqnarray}
& u_{1}(x,t)=u_{1}(x,t+2\pi), \;& u_{2}(x,t)=u_{2}(x,t+2\pi), \label{f3pl}
\\
& u_{1}(x,t)=u_{1}(x+1,t),  \;\;\;& u_{2}(x,t)=u_{2}(x+1,t).\label{f3pl+}
\end{eqnarray}
 This problem is a particular case of \reff{f1}, \reff{f2}
 and satisfies all assumptions of Theorem~\ref{thm:th12} with the exception of \reff{fz8}. 
It is straightforward to check that
$$
 \begin{array}{cc}
u_1=\sin(2\pi x)\sin l(t-2\pi x),\quad l\in\N, \\
u_2=\cos(2\pi x)\sin l(t-2\pi x),\quad l\in\N,
\end{array}
$$
are infinitely many linearly independent solutions to the problem \reff{one2pi}--\reff{f3pl+} and, therefore, the kernel of the operator of \reff{one2pi}--\reff{f3pl+} is infinite dimensional. Thus, the conclusion of Theorem \ref{thm:th12} is not true without \reff{fz8}.
\end{ex}

\begin{ex}\label{thm:ex2}\rm
Consider the $2\times 2$-system
\begin{equation}\label{1ex}
 \begin{array}{cc}
\displaystyle\partial_tu_1+\partial_xu_1=0, \\
\partial_tu_2+\partial_xu_2+bu_1=0,
\end{array}
 \end{equation}
with the periodic conditions in time
$$
 u_{1}(x,t+2\pi)=u_{1}(x,t),\quad u_{2}(x,t+2\pi)=u_{2}(x,t), 
$$
and the reflection conditions in space
$$
 u_{1}(0,t)=r_0u_{2}(0,t),\quad u_{2}(1,t)=r_{1}u_{1}(1,t). 
$$
 Here $r_0$ and $r_1$ are real numbers and $b$ is a non-zero constant. 
If $r_0r_1<1$, then all but \reff{fz8} assumptions of Theorem~\ref{thm:th12} are fulfilled. 
If, moreover,
$$
b=\frac{r_0r_1-1}{r_0},
$$
then
$$
u_1(x,t)=\sin{l(t-x)}, \;\;\; u_2(x,t)=b\left(\frac{1}{1-r_0r_1}-x\right)\sin{l(t-x)}, \; l\in\N,
$$
are infinitely many linearly independent solutions. Again, the conclusion of Theorem \ref{thm:th12} is not true.
\end{ex}

\subsubsection{About the boundary conditions \reff{f2}}

The boundary operator $R$ covers different kinds of reflections, in particular, 
periodic boundary conditions in $x$ and reflection boundary conditions with delays 
(see, e.g., \cite{Lyu} and references therein), for example, if
$$
 (Ru)_{j}(t)=
\sum_{k=1}^{n}\sum_{s=1}^{p}\left[r^0_{jk}(t)u_k(0,t-\theta_{s})+r^1_{jk}(t)u_k(1,t-\theta_{s})
\right], 
\quad j\le n,
$$
where $r_{jk}^0$ and $r_{jk}^1$ are $t$-periodic and continuous functions and $\theta_{s}$ are fixed 
real numbers.

\section{Fredholm alternative (proof of Theorem \ref{thm:th12})}\label{sec:Fredh}

\renewcommand{\theequation}{{\thesection}.\arabic{equation}}
\setcounter{equation}{0}

 Define bounded linear operators $B,F: C_{n,2\pi}\to C_{n,2\pi}$ by
 \beq \label{f34}
 (Bu)_j(x,t)=
 -\int_{x_j}^{x}d_j(\xi,x,t)\sum_{j\neq k} b_{jk}(\xi,\omega_j(\xi))u_k(\xi,\omega_j(\xi)) d\xi, \;\;\; j\le n,
 \ee
 and
 \beq \label{f35}
 (Ff)_j(x,t)=
 \int_{x_j}^{x}d_j(\xi,x,t)f_j(\xi,\omega_j(\xi)) d\xi, \;\;\; j\le n,
 \ee
 where $x_j$ is given by \reff{xj}.
 On the account of \reff{f12}, \reff{f34}, and \reff{f35}, the system \reff{f10}--\reff{f11} can be written as the operator equation
$$
 u=Cu+Bu+Ff.
$$
Note that Theorem \ref{thm:th12} says exactly that the operator $I-C-B: C_{n,2\pi}\to C_{n,2\pi}$ 
is Fredholm of index zero.

 \begin{lem}\label{lem:l1}
The operator $I-C: C_{n,2\pi}\to C_{n,2\pi}$ is bijective.
 \end{lem}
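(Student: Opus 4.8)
The plan is to show that $I-C$ is bijective by establishing that it is invertible via a Neumann-type series argument, using the hypothesis that $\|C^\ell\|_{\mathcal{L}(C_{n,2\pi})}<1$ for some $\ell\in\N$. The key observation is that the spectral condition \reff{f14} on the power $C^\ell$, rather than on $C$ itself, still forces the spectral radius of $C$ to be strictly less than one, which is precisely what guarantees invertibility of $I-C$.

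\smallskip

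First I would recall Gelfand's formula for the spectral radius, $\rho(C)=\lim_{k\to\infty}\|C^k\|_{\mathcal{L}(C_{n,2\pi})}^{1/k}$. From \reff{f14} I would extract the estimate on powers of $C$ along the subsequence of multiples of $\ell$: writing an arbitrary large exponent as $k=q\ell+r$ with $0\le r<\ell$, submultiplicativity of the operator norm gives $\|C^k\|\le \|C^\ell\|^{q}\,\|C\|^{r}$. Taking $k$-th roots and letting $k\to\infty$ (so that $q/k\to 1/\ell$ and $r/k\to 0$) yields $\rho(C)\le \|C^\ell\|^{1/\ell}<1$. Hence the spectral radius of $C$ is strictly less than one.

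\smallskip

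Once $\rho(C)<1$ is in hand, the Neumann series $\sum_{k=0}^{\infty}C^k$ converges in the operator norm of $\mathcal{L}(C_{n,2\pi})$, since $C_{n,2\pi}$ is a Banach space and the series is absolutely convergent by the root test applied through Gelfand's formula. Its sum is a bounded linear operator that is a two-sided inverse of $I-C$, which establishes bijectivity. I would then note that the boundedness of $C$ itself, needed for all these manipulations, was already recorded when the operator was introduced as an element of $\mathcal{L}(C_{n,2\pi})$ in \reff{f12}, so no separate verification is required here.

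\smallskip

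I do not anticipate a serious obstacle in this lemma: the only subtlety is that the smallness hypothesis is imposed on $C^\ell$ rather than $C$, and the main point is simply to convert this into a statement about the spectral radius via Gelfand's formula. The estimate $\rho(C)\le\|C^\ell\|^{1/\ell}$ is the crux of the argument, and everything else is the standard Neumann series machinery for operators of spectral radius less than one in a Banach space.
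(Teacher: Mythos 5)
Your proof is correct, and it is essentially the argument the paper has in mind: the paper dispatches this lemma in one line by invoking \reff{f14} together with the Banach fixed-point theorem (in the form that a map whose $\ell$-th iterate is a contraction has a unique fixed point, applied to $u\mapsto Cu+g$), which is the same mechanism as your Gelfand-formula estimate $\rho(C)\le\|C^{\ell}\|^{1/\ell}<1$ followed by the Neumann series. The only cosmetic difference is the packaging --- fixed-point theorem versus spectral radius --- and both hinge on exactly the observation you identify as the crux, namely converting smallness of $C^{\ell}$ into invertibility of $I-C$.
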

 The proof is a straightforward consequence of the condition \reff{f14} and the Banach fixed-point theorem.

By Lemma \ref{lem:l1}, the operator $I-C-B: C_{n,2\pi}\to C_{n,2\pi}$ 
is Fredholm of index zero if and only if
\beq \label{f37}
I-(I-C)^{-1}B: C_{n,2\pi}\to C_{n,2\pi} \mbox{ is a Fredholm operator of index zero}.
\ee
 To prove \reff{f37}, we will use Nikolsky's criterion of Fredholmness in Banach spaces \cite[Theorem XIII.5.2]{KA}. This criterion  says that an operator $I+K$ on a Banach space is Fredholm of index zero whenever $K^2$ is compact. It is interesting to note that the compactness of $K^2$ 
and the identity $I-K^2=(I+K)(I-K)$ imply 
that the operator $I-K$ is a parametrix of the operator $I+K$; see \cite{zeid}.

We, therefore, have to show that the operator $[(I-C)^{-1}B]^2=(I-C)^{-1}B(I-C)^{-1}B: 
C_{n,2\pi}\to C_{n,2\pi}$ is compact. As the composition of a compact and a bounded operator is a compact operator, it is enough to show that  
$$
B(I-C)^{-1}B: C_{n,2\pi}\to C_{n,2\pi} \mbox{ is  compact.}
$$
Since $B(I-C)^{-1}B=B(I+C+C^2+...)B=B^2+BC(I+C+C^2+...)B=B^2+BC(I-C)^{-1}B$ and $(I-C)^{-1}B$ is bounded, it suffices to prove that
\beq \label{f38}
B^2 \mbox{ and } BC \mbox{ are compact operators from } C_{n,2\pi} \mbox{ to }  C_{n,2\pi}.
\ee

By the Arzela-Ascoli theorem, $C_{n,2\pi}^1$ is compactly embedded into $C_{n,2\pi}$. The desired 
compactness property \reff{f38} will follow if we show that
\beq \label{f309}
B^2 \; \mbox{and} \; BC \mbox{ map continuously } C_{n,2\pi} \mbox{ into }  C_{n,2\pi}^1.
\ee

Using \reff{f12}, \reff{f34} and the equalities
\beq \label{f22}
 \partial_x\omega_{j}(\xi)=
 -\frac{1}{a_j(x,t)}\exp{{\int_{\xi}^{x}\left (\frac{\partial_2a_j}{a_j^2}\right )(\eta,\omega_j(\eta)) d\eta}},
 \ee
 \beq \label{f23}
 \partial_t\omega_{j}(\xi)=
 \exp{{\int_{\xi}^{x}\left (\frac{\partial_2a_j}{a_j^2}\right )(\eta,\omega_j(\eta)) d\eta}},
 \ee
 being true for all $j\le n$, $\xi,x\in[0,1]$, and $t\in\R$, 
we see that
the partial derivatives $\partial_xB^2u$, $\partial_{t}B^2u$, $\partial_xBCu$, $\partial_tBCu$ exist and are continuous for each $u\in C_{n,2\pi}^1$. Here and below by $\d_i$ we denote the partial derivative with respect to the $i$-th argument. Since $C^1_{n,2\pi}$ is dense in $C_{n,2\pi}$, the desired condition \reff{f309} will follow from the next lemma, whose proof will therefore complete proving Theorem \ref{thm:th12}.
\begin{lem}\label{lem:l3}
For all $u\in C^1_{n,2\pi}$ we have
\beq \label{f310}
\left\|B^2u\right\|_{1}
+\|BCu\|_{1}
= O\left(\|u\|_{\infty}\right).
\ee
\end{lem}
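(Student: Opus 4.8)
The plan is to prove the estimate \reff{f310} directly for $u\in C^1_{n,2\pi}$; by the density of $C^1_{n,2\pi}$ in $C_{n,2\pi}$ and the already-established existence and continuity of $\partial_xB^2u$, $\partial_tB^2u$, $\partial_xBCu$, $\partial_tBCu$, this suffices. First I would differentiate $B^2u$ and $BCu$ in $x$ and $t$ using \reff{f22}--\reff{f23} and sort the resulting terms into two groups. In the \emph{harmless} group the derivative falls on one of the smooth factors $d_j$, $c_j$, $b_{jk}$, or on the characteristic $\omega_j$ sitting inside a smooth coefficient; each such term is a characteristic integral of a bounded kernel against $u$, $Bu$, or $Cu$, so, using that $B$, $C$ and $R$ are bounded in the sup-norm, it is immediately $O(\|u\|_\infty)$. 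The \emph{dangerous} group consists of the terms in which the differentiation reaches the innermost data, producing $\partial_2u_l$ (for $B^2$) or $(Ru)_k'$ (for $BC$); these are a priori only $O(\|u\|_1)$ and must be reworked.

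The heart of the argument is an identity that trades such a transversal derivative for a total derivative along the \emph{outer} characteristic, at the cost of the factor $a_k-a_j$, which \reff{fz8} then cancels. For $B^2u$ the dangerous term is a double characteristic integral in which $u_l$ is evaluated at $\big(\eta,\omega_k(\eta,\xi,\omega_j(\xi))\big)$, i.e. along the inner $k$-characteristic whose base point $(\xi,\omega_j(\xi))$ runs along the outer $j$-characteristic. Differentiating this argument in the outer variable $\xi$, and using \reff{f22}--\reff{f23} in the form $\partial_2\omega_k=-a_k^{-1}\partial_3\omega_k$ together with $\partial_1\omega_j=a_j^{-1}$, gives
\[
\partial_\xi\big[u_l(\eta,\omega_k(\eta,\xi,\omega_j(\xi)))\big]
=\partial_2u_l\cdot\partial_3\omega_k\cdot\frac{a_k-a_j}{a_ja_k},
\]
so that, by \reff{fz8},
\[
b_{jk}\,\partial_2u_l
=\tilde{b}_{jk}\,(a_k-a_j)\,\partial_2u_l
=\tilde{b}_{jk}\,\frac{a_ja_k}{\partial_3\omega_k}\,
\partial_\xi\big[u_l(\eta,\omega_k(\eta,\xi,\omega_j(\xi)))\big].
\]
Here $a_j,a_k\neq0$ by \reff{f5} and $\partial_3\omega_k=\exp\{\cdots\}$ is bounded away from zero, so the prefactor is a bounded coefficient; note also that the spurious $\partial_3\omega_k$ thrown off by the chain rule in the dangerous term cancels the one in this denominator. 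This is precisely where \reff{fz8} is indispensable: without the factor $a_k-a_j$ supplied by $b_{jk}$ one could not isolate $\partial_2u_l$, since the two characteristics become tangent where $a_k=a_j$. For $BCu$ the same computation applies verbatim with $u_l(\eta,\cdot)$ replaced by $(Ru)_k$ evaluated at $\omega_k(x_k,\xi,\omega_j(\xi))$, turning $(Ru)_k'$ into $\partial_\xi[(Ru)_k(\omega_k(x_k,\xi,\omega_j(\xi)))]$ times a bounded factor.

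Having written each dangerous term as a bounded coefficient times a total $\xi$-derivative, I would then integrate by parts in $\xi$ (after a Fubini interchange in the $B^2$ case, so that the inner variable $\eta$ is frozen). The boundary contributions are point evaluations of $u_l$ (respectively of $(Ru)_k$) at the endpoints of the $\xi$-integration, hence $O(\|u\|_\infty)$; for the $BC$ terms this is where I use only that $R$ is bounded $C_{n,2\pi}\to C_{n,2\pi}(\R)$ in the sup-norm, while \reff{ft7} is needed merely to guarantee that $(Ru)_k'$ exists for $u\in C^1_{n,2\pi}$. The remaining integral pairs $u_l$ (respectively $(Ru)_k$) against the $\xi$-derivative of a bounded coefficient, again $O(\|u\|_\infty)$. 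The $\partial_x$-estimates then need no separate work: since $\partial_x\omega_j=-a_j(x,t)^{-1}\partial_t\omega_j$ by \reff{f22}--\reff{f23}, the dangerous part of each $x$-derivative equals $-a_j(x,t)^{-1}$ times the corresponding $t$-derivative term, so the same bound applies.

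The main obstacle I anticipate is organisational rather than conceptual: keeping track of the many harmless terms generated by the product rule, and verifying that the prefactor $\tilde{b}_{jk}\,a_ja_k/\partial_3\omega_k$ and every coefficient produced by the integration by parts are genuinely $C^1$ and bounded, which rests on \reff{f4} and \reff{f5} (in particular $\partial_\xi\partial_t\omega_j$ must be continuous, and this is exactly what the $C^1$ hypothesis on $a_j$ provides). One must also check that two applications of $B$, or the composition $BC$, really are needed: a single $B$ evaluates $u_k$ along one and the same characteristic, where $\partial_\xi$ produces the inseparable combination $\partial_1u_k+a_j^{-1}\partial_2u_k$ and no transversality factor $a_k-a_j$ is available, so one derivative cannot be removed. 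It is the second characteristic, introduced by the inner $B$ or by $C$, that creates the $a_k-a_j$ needed for \reff{fz8} to take effect.
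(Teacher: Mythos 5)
Your proposal is correct and follows essentially the same route as the paper's proof: the Fubini representation of $B^2u$, the key identity converting $\partial_2u_l$ (resp. $(Ru)_k'$) into a total $\xi$-derivative along the outer characteristic via \reff{f22}--\reff{f23} and the factorization \reff{fz8}, integration by parts in $\xi$, and reduction of the $\partial_x$-bound to the $\partial_t$-bound through the relation $\partial_x\omega_j=-a_j^{-1}\partial_t\omega_j$ (which the paper packages as the directional-derivative estimate for $\partial_t+a_j\partial_x$). The identities you write are exactly the paper's \reff{eqwn} and \reff{f313}, and your accounting of where \reff{f4}, \reff{f5}, \reff{fz8} and \reff{ft7} enter matches the paper.
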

\begin{proof}
{\it Claim 1. The following estimate is true:
\beq \label{D2}
\left\|B^2u\right\|_{1}
= O\left(\|u\|_{\infty}\right)\;\textrm{for all} \; u\in C^1_{n,2\pi}.
\ee
}
 Given $j\le n$ and $u\in C^1_{n,2\pi}$, let us consider the following representation for $(B^2u)_j(x,t)$ obtained by application of the Fubini theorem:
\beq \label{f311}
(B^2u)_j(x,t)
=\sum_{k\neq j}\sum_{l\neq k}\int_{x_j}^x\int_{\eta}^{x} d_{jkl}(\xi,\eta,x,t)b_{jk}(\xi,\omega_j(\xi))u_l(\eta,\omega_k(\eta,\xi,\omega_j(\xi))) d\xi d\eta,
\ee
where 
\beq \label{f311d}
d_{jkl}(\xi,\eta,x,t)=d_j(\xi,x,t)d_{k}(\eta,\xi,\omega_{j}(\xi))b_{kl}(\eta,\omega_{k}(\eta,\xi,\omega_j(\xi))).
\ee
From \reff{f311} we immediately get
 the bound
$$
\|B^2u\|_{\infty}=O(\|u\|_{\infty}).
$$
We now claim that 
\beq \label{f31jh1}
\|[(\d_t+a_j(x,t)\d_x)(B^2u)_j]\|_{\infty} = O\left(\|u\|_{\infty}\right)
\mbox{ for all } j\le n \mbox{ and } u\in C^1_{n,2\pi}.
\ee
To prove this, we use  the identity (which follows from \reff{f22} and \reff{f23})
$$
(\d_t+a_j(x,t)\d_x)\varphi(\omega_j(\xi,x,t))\equiv 0,
$$
being true for all $j\le n$, $\varphi\in C^1(\R)$, $x,\xi\in[0,1]$, and $t\in\R$. 
On the account of \reff{f8} and \reff{f311d}, this entails that  for all $j\le n$, $k\le n$, and $l\le n$ we have 
$$
\begin{array}{cc}
(\d_t+a_j(x,t)\d_x)d_{jkl}(\xi,\eta,x,t)\equiv0,
\\ [2mm]
(\d_t+a_j(x,t)\d_x)b_{jk}(\xi,\omega_j(\xi))\equiv0,
\\ [2mm]
(\d_t+a_j(x,t)\d_x)u_l(\eta,\omega_k(\eta,\xi,\omega_j(\xi)))\equiv0.
\end{array}
$$
Using \reff{f311},  we conclude that
\begin{eqnarray*}
\lefteqn{
(\d_t+a_j(x,t)\d_x)(B^2u)_j}\\ &&
=(\d_t+a_j(x,t)\d_x)
\left (\sum_{k\neq j}\sum_{l\neq k}\int_{x_j}^x\int_{\eta}^{x} d_{jkl}(\xi,\eta,x,t)b_{jk}(\xi,\omega_j(\xi))u_l(\eta,\omega_k(\eta,\xi,\omega_j(\xi))) d\xi d\eta\right )
\\ &&
=a_j(x,t)\sum_{k\neq j}\sum_{l\neq k}\int_{x_j}^x d_{jkl}(x,\eta,x,t)b_{jk}(x,\omega_j(x)) u_l(\eta,\omega_k(\eta,x,\omega_j(x))) d\eta
\\ &&
=a_j(x,t)\sum_{k\neq j}b_{jk}(x,t)\sum_{l\neq k}\int_{x_j}^{x} d_{jkl}(x,\eta,x,t) u_l(\eta,\omega_k(\eta)) d\eta.
\end{eqnarray*}
The estimate \reff{f31jh1} now easily follows.

In order to prove \reff{D2}, we have to prove two estimates
\beq\label{x}
\left\|\d_xB^2u\right\|_{\infty} = O\left(\|u\|_{\infty}\right)
\ee
and
\beq \label{f31jhr1}
\|\d_tB^2u||_{\infty} = O\left(\|u\|_{\infty}\right).
\ee
Since \reff{x} follows from \reff{f31jhr1} by \reff{f31jh1} and \reff{f5}, it is enough to prove
\reff{f31jhr1}.

To this end, we start with the following consequence of \reff{f311}:
\begin{eqnarray*}
\lefteqn{
\d_t[(B^2u)_j(x,t)]}
\nonumber\\ &&
=\displaystyle\sum_{k\neq j}\sum_{l\neq k}\int_{x_j}^x\int_{\eta}^{x} \frac{d}{dt}\Bigl[ d_{jkl}(\xi,\eta,x,t)b_{jk}(\xi,\omega_j(\xi))\Bigr] u_l(\eta,\omega_k(\eta,\xi,\omega_j(\xi))) d\xi d\eta
\nonumber\\ &&
+\displaystyle\sum_{k\neq j}\sum_{l\neq k}\int_{x_j}^x\int_{\eta}^{x} d_{jkl}(\xi,\eta,x,t) b_{jk}(\xi,\omega_j(\xi))
\nonumber\\ &&
\times
\d_t\omega_k(\eta,\xi,\omega_j(\xi))\d_t\omega_j(\xi)\d_2u_l(\eta,\omega_k(\eta,\xi,\omega_j(\xi))) d\xi d\eta. \label{f312}
\end{eqnarray*}
Let us transform the second summand. Using \reff{f7}, \reff{f22}, and \reff{f23}, we get
\begin{eqnarray}
\lefteqn{
\frac{d}{d\xi} u_l(\eta,\omega_k(\eta,\xi,\omega_j(\xi)))} \nonumber \\ &&
=\Bigl[\d_x\omega_k(\eta,\xi,\omega_j(\xi))+\d_t\omega_k(\eta,\xi,\omega_j(\xi))\d_{\xi}\omega_j(\xi)\Bigr] \d_2u_l(\eta,\omega_k(\eta,\xi,\omega_j(\xi))) \label{eqwn}
\\ &&
=\left ( \frac{1}{a_j(\xi,\omega_j(\xi))}-\frac{1}{a_k(\xi,\omega_j(\xi))}\right ) \d_t\omega_k(\eta,\xi,\omega_j(\xi))\d_2u_l(\eta,\omega_k(\eta,\xi,\omega_j(\xi))). \nonumber
\end{eqnarray}
Therefore, 
\begin{eqnarray}
\lefteqn{ 
 b_{jk}(\xi,\omega_j(\xi))\d_t\omega_k(\eta,\xi,\omega_j(\xi))\d_2u_l(\eta,\omega_k(\eta,\xi,\omega_j(\xi)))}
\nonumber \\ &&
 =\displaystyle a_j(\xi,\omega_j(\xi))a_k(\xi,\omega_j(\xi))\tilde{b}_{jk}(\xi,\omega_j(\xi))\frac{d}{d\xi} u_l(\eta,\omega_k(\eta,\xi,\omega_j(\xi))), \label{f313}
\end{eqnarray}
where the functions $\tilde{b}_{jk}\in C_{2\pi}$ are fixed so that they satisfy \reff{fz8}. Note that $\tilde{b}_{jk}$ are not uniquely defined by \reff{fz8} for $(x,t)$ with $a_{j}(x,t)=a_{k}(x,t)$. Nevertheless, as it follows from \reff{eqwn}, the right-hand side (and, hence, the left-hand side of \reff{f313}) do not depend on the choice of $\tilde{b}_{jk}$, since $\frac{d}{d\xi}u_{l}(\eta,\omega_{k}(\eta,\xi,\omega_{j}(\xi)))=0$ if $a_{j}(x,t)=a_{k}(x,t)$. 

Write
$$
\tilde{d}_{jkl}(\xi,\eta,x,t)
=d_{jkl}(\xi,\eta,x,t)\d_t\omega_j(\xi)a_k(\xi,\omega_j(\xi))a_j(\xi,\omega_j(\xi))\tilde{b}_{jk}(\xi,\omega_j(\xi)),
$$
where $d_{jkl}$ is introduced by \reff{f311d} and \reff{f8}. Using  \reff{f7} and \reff{f22}, 
we see that the function $\tilde{d}_{jkl}(\xi,\eta,x,t)$ is $C^1$-regular
 in $\xi$ due to the regularity assumptions \reff{f4} and \reff{fz8}. Similarly,
using \reff{f23}, we see that the functions $d_{jkl}(\xi,\eta,x,t)$ and $b_{jk}(\xi,\omega_j(\xi))$ are $C^1$-smooth in $t$.

By \reff{f313} we have
\begin{eqnarray*}
\lefteqn{
\d_t[(B^2u)_j(x,t)]} \\ &&
= \displaystyle \sum_{k\neq j}\sum_{l\neq k}\int_{x_j}^x\int_{\eta}^{x} \frac{d}{dt} [d_{jkl}(\xi,\eta,x,t)b_{jk}(\xi,\omega_j(\xi))] u_l(\eta,\omega_k(\eta,\xi,\omega_j(\xi))) d\xi d\eta
\\ &&
+\displaystyle \sum_{k\neq j}\sum_{l\neq k}\int_{x_j}^x\int_{\eta}^{x}\tilde{d}_{jkl}(\xi,\eta,x,t)\frac{d}{d\xi} u_l(\eta,\omega_k(\eta,\xi,\omega_j(\xi))) d\xi d\eta
\\ &&
=\displaystyle \sum_{k\neq j}\sum_{l\neq k}\int_{x_j}^x\int_{\eta}^{x} \frac{d}{dt} [d_{jkl}(\xi,\eta,x,t)b_{jk}(\xi,\omega_j(\xi))] u_l(\eta,\omega_k(\eta,\xi,\omega_j(\xi))) d\xi d\eta
\\ &&
-\displaystyle \sum_{k\neq j}\sum_{l\neq k}\int_{x_j}^x\int_{\eta}^{x}\d_{\xi}\tilde{d}_{jkl}(\xi,\eta,x,t)u_l(\eta,\omega_k(\eta,\xi,\omega_j(\xi))) d\xi d\eta
\\ &&
+\displaystyle \sum_{k\neq j}\sum_{l\neq k}\int_{x_j}^x\left [\tilde{d}_{jkl}(\xi,\eta,x,t) u_l(\eta,\omega_k(\eta,\xi,\omega_j(\xi)))\right ]_{\xi=\eta}^{\xi=x} d\eta.
\end{eqnarray*}
The desired estimate \reff{f31jhr1} now easily follows from the assumptions \reff{f4}, \reff{f5} and \reff{fz8}.

{\it Claim 2. The following estimate is true:
$$
\|BCu\|_{1}
= O\left(\|u\|_{\infty}\right)\;\textrm{for all} \; u\in C^1_{n,2\pi}.
$$
}
We are done if we  show that 
\beq \label{f314}
\|BCu\|_{\infty}+\|\d_tBCu\|_{\infty} = O\left(\|u\|_{\infty}\right) \;\textrm{for all} \; u\in C^1_{n,2\pi}, 
\ee
as the estimate for $\d_xBCu$ follows similarly to the case of $\d_xB^2u$. 
In order to prove \reff{f314}, we consider an arbitrary  integral contributing into $BCu$,
namely
\beq \label{f315}
\int_{x}^{x_j} e_{jk}(\xi,x,t)b_{jk}(\xi,\omega_j(\xi))(Ru)_k(\omega_k(x_k,\xi,\omega_j(\xi))) d\xi,
\ee
where 
$$
e_{jk}(\xi,x,t)=d_j(\xi,x,t)c_k(x_k,\xi,\omega_j(\xi))
$$
and $j\le n$ and $k\le n$ are arbitrary fixed. From \reff{f315} it follows the bound
$$
\|BCu\|_{\infty}=O(\|u\|_{\infty}).
$$
Differentiating \reff{f315} in $t$, we get
\begin{eqnarray}
\lefteqn{
\displaystyle \int_{x}^{x_j} \frac{d}{dt}\Bigl[e_{jk}(\xi,x,t)b_{jk}(\xi,\omega_j(\xi))\Bigr](Ru)_{k}(\omega_k(x_k,\xi,\omega_j(\xi))) d\xi}
\label{dtDC}
\\ &&
+\displaystyle \int_{x}^{x_j} e_{jk}(\xi,x,t)b_{jk}(\xi,\omega_j(\xi))
\d_t\omega_k(x_k,\xi,\omega_j(\xi))\d_t\omega_j(\xi)(Ru)_k^\prime (\omega_k(x_k,\xi,\omega_j(\xi))) d\xi.\nonumber
\end{eqnarray}
Our task is to estimate the second integral; for the first one the desired estimate is obvious. 
Similarly to the above, we use \reff{f7}, \reff{f22}, and \reff{f23} to obtain 
\begin{eqnarray*}
\lefteqn{
\frac{d}{d\xi} (Ru)_k(\omega_k(x_k,\xi,\omega_j(\xi)))}
\\ &&
=\Bigl[\d_x\omega_k(x_k,\xi,\omega_j(\xi))+\d_t\omega_k(x_k,\xi,\omega_j(\xi))\d_{\xi}\omega_j(\xi)\Bigr](Ru)_{k}^\prime(\omega_k(x_k,\xi,\omega_j(\xi)))
\\ &&
=\left ( \frac{1}{a_j(\xi,\omega_j(\xi))}-\frac{1}{a_k(\xi,\omega_j(\xi))}\right ) \d_t\omega_k(x_k,\xi,\omega_j(\xi))(Ru)_{k}^\prime(\omega_k(x_k,\xi,\omega_j(\xi))).
\end{eqnarray*}
Taking into account \reff{fz8}, the last expression reads
\begin{eqnarray}
\lefteqn{ 
 b_{jk}(\xi,\omega_j(\xi))\d_t\omega_k(x_k,\xi,\omega_j(\xi))(Ru)_{k}^\prime(\omega_k(x_k,\xi,\omega_j(\xi)))}
\nonumber \\ &&
 =\displaystyle a_j(\xi,\omega_j(\xi))a_k(\xi,\omega_j(\xi))\tilde{b}_{jk}(\xi,\omega_j(\xi))\frac{d}{d\xi}(Ru)_{k}(\omega_k(x_k,\xi,\omega_j(\xi))).\label{f313009}
\end{eqnarray}
Set
$$
\tilde{e}_{jk}(\xi,x,t)
=e_{jk}(\xi,x,t)\d_t\omega_j(\xi)a_k(\xi,\omega_j(\xi))a_j(\xi,\omega_j(\xi))\tilde{b}_{jk}(\xi,\omega_j(\xi)).
$$
Using \reff{f313009}, let us transform the second summand in \reff{dtDC} as
\begin{eqnarray}
&\displaystyle \int_{x}^{x_j} e_{jk}(\xi,x,t)b_{jk}(\xi,\omega_j(\xi))
\d_t\omega_k(x_k,\xi,\omega_j(\xi))\d_t\omega_j(\xi)(Ru)^\prime_{k}(\omega_k(x_k,\xi,\omega_j(\xi))) 
d\xi&
\nonumber\\ &
\displaystyle=\int_{x}^{x_j}\tilde{e}_{jk}(\xi,x,t)\frac{d}{d\xi} (Ru)_{k}(\omega_k(x_k,\xi,\omega_j(\xi))) d\xi&
\nonumber\\ &
\displaystyle=\Bigl[\tilde{e}_{jk}(\xi,x,t) (Ru)_k(\omega_k(x_k,\xi,\omega_j(\xi)))\Bigr]_{\xi=x}^{\xi=x_j}&
\nonumber\\ &\displaystyle
-\int_{x}^{x_j}\d_{\xi}\tilde{e}_{jk}(\xi,x,t)(Ru)_k(\omega_k(x_k,\xi,\omega_j(\xi))) d\xi.&
\label{fc22}
\end{eqnarray}
The bound \reff{f314} now easily follows from \reff{dtDC} and \reff{fc22}. 
The lemma is therewith proved.
\end{proof}

\section{Reflection boundary conditions and non-resonant behavior}\label{sec:refl}

\renewcommand{\theequation}{{\thesection}.\arabic{equation}}
\setcounter{equation}{0}

As we have seen in Section~\ref{sec:ex}, in many mathematical models the system \reff{f1} is controlled by the so-called reflection boundary 
conditions. We intend to show that for such problems
the basic assumption \reff{f14}
of Theorem \ref{thm:th12} can be extended.

\subsection{The case of $2\times 2$ systems}

Let \reff{f1} be a system  of two equations, namely
\begin{equation}
\begin{array}{cc}
 \partial_{t}u_1
 +a_1(x,t)\partial_{x}u_1
 +b_{11}(x,t)u_1+b_{12}(x,t)u_2
 =f_1(x,t),\\
 \partial_{t}u_2
 +a_2(x,t)\partial_{x}u_2
 +b_{21}(x,t)u_1+b_{22}(x,t)u_2
 =f_2(x,t),
\end{array}
 \label{f331}
 \end{equation}
 endowed with the periodic conditions in time
\begin{equation}\label{f101}
 u_{j}(x,t)=u_{j}(x,t+2\pi), \;\;\; j=1,2,
 \end{equation}
 and the boundary conditions
\beq \label{FCCC}
\begin{array}{cc}
 u_{1}(0,t)
 =(Ru)_1(t)
 =p_0(t)u_2(0,t),
 \;\;\;
 \\ [2mm]
 u_{2}(1,t)
 =(Ru)_2(t)
 =p_{1}(t)u_1(1,t),
 \;\;\; 
 \end{array}
 \ee
 where $p_{0},p_1\in C_{2\pi}(\R)$. We are able  to derive a sharp non-resonance condition
(ensuring the bijectivity of the operator $I-C$, where $C$ is introduced by \reff{f12}),
which is stable with respect to data perturbations.
Accordingly to \reff{f331}--\reff{FCCC}, the operator 
$C:C_{2,2\pi}\to C_{2,2\pi}$ reads
$$
(Cv)_j(t)=\left\{
\begin{array}{rl}
c_1(0,x,t)p_{0}(\omega_1(0))v_2(0,\omega_1(0)) &\mbox{for}\ j=1,\\
c_2(1,x,t)p_{1}(\omega_2(1))v_1(1,\omega_2(1)) &\mbox{for}\ j=2.
\end{array}
\right.
$$
Then the bijectivity of  $I-C:C_{2,2\pi}\to C_{2,2\pi}$ means that the system
$$
\begin{array}{cc}
u_1(x,t)=c_1(0,x,t)p_0(\omega_1(0))u_2(0,\omega_1(0))
 \\ [3mm]
u_2(x,t)=c_2(1,x,t)p_1(\omega_2(1))u_1(1,\omega_2(1))
\end{array}
$$
has a unique (trivial) solution in $C_{2,2\pi}$ or, the same, the system
$$
\begin{array}{ccl}
u_1(x,t)&=&c_1(0,x,t)p_0(\omega_1(0))c_2(1,0,\omega_1(0))p_1(\omega_2(1,0,\omega_1(0)))u_1(1,\omega_2(1,0,\omega_1(0)))
 \\ [3mm]
u_2(x,t)&=&c_2(1,x,t)p_1(\omega_2(1))u_1(1,\omega_2(1))
\end{array}
$$
has a unique solution in $C_{2,2\pi}$. The first equation at $x=1$ reads
\beq \label{FFAC}
\begin{array}{ccl}
u_1(1,t)&=&c_1(0,1,t)p_0(\omega_1(0,1,t))c_2(1,0,\omega_1(0,1,t))
\\ [3mm]
&&\times p_1(\omega_2(1,0,\omega_1(0,1,t)))u_1(1,\omega_2(1,0,\omega_1(0,1,t))).
\end{array}
\ee
Consider two maps $z(t)=\omega_2(1,0,t)$ and $z(t)=\omega_1(0,1,t)$.
Due to \reff{f5}, both of them  are 
monotonically increasing from $\R$ to $\R$.
Hence, the map $z(t)=\omega_2(1,0,\omega_1(0,1,t))$ is bijective. Moreover, 
the equation \reff{FFAC} is uniquely solvable in $C_{2,2\pi}$ if and only if
$$
|c_1(0,1,t)p_0(\omega_1(0,1,t))c_2(1,0,\omega_1(0,1,t))p_1(\omega_2(1,0,\omega_1(0,1,t)))|\neq 1 \;\textrm{for all}\; t\in\R,
$$
or, the same, if and only if
\beq\label{l333}
\begin{array}{cc}
\displaystyle\exp{\int_{0}^{1}\left[\left (\frac{b_{22}}{a_2}\right )(\eta,\omega_2(\eta,0,\omega_1(0,1,t)))-\left (\frac{b_{11}}{a_1}\right )(\eta,\omega_1(\eta,1,t))\right] d\eta}
\\ [6mm]
\times\left|p_0(\omega_1(0,1,t))p_1(\omega_2(1,0,\omega_1(0,1,t)))\right|\neq 1 \;\textrm{for all}\; t\in\R.
\end{array}
\ee
This is the desired non-resonance condition, which is obviously sharp. Moreover, 
it is stable with respect to data 
perturbation.
Note that, if \reff{l333} is not fulfilled, then 
\reff{f331}--\reff{FCCC} demonstrates the so-called completely resonance behavior.

We also see that the non-resonant behavior of the system \reff{f331}--\reff{FCCC} 
is controlled by the coefficients $a_1,a_2$  of the differential part and by the 
coefficients $b_{11},b_{22}$ of the diagonal lower order part of the hyperbolic system, 
as well as by the reflection coefficients $p_0,p_1$.

\subsection{The case of $n\times n$ systems}

Let us consider the system \reff{f1} with the reflection boundary conditions
\beq \label{f42}
\begin{array}{ccl}
 u_{j}(0,t)&=&
 \displaystyle\sum_{k=m+1}^{n}p_{jk}(t)u_k(0,t)+\sum_{k=1}^{m}p_{jk}(t)u_k(1,t),
 \;\;\; 1\le j\le m,
 \\ [4mm]
 u_{j}(1,t)&=&
 \displaystyle\sum_{k=m+1}^{n}p_{jk}(t)u_k(0,t)+\sum_{k=1}^{m}p_{jk}(t)u_k(1,t),
 \;\;\; m<j\le n,
 \end{array}
 \ee
 where $p_{jk}\in C_{2\pi}(\R)$. Then the operator $C: C_{n,2\pi}\to C_{n,2\pi}$ reads
 $$
\begin{array}{cc}
 (Cv)_j(x,t)
 = c_j(x_j,x,t)\left[\displaystyle(1-x_j)\sum_{k=1}^{n}p_{jk}(\omega_j(0))v_k(1-x_k,\omega_j(0))\right.
 \\ [3mm]
 +\left.\displaystyle x_j\sum_{k=1}^{n}p_{jk}(\omega_j(1))v_k(1-x_k,\omega_j(1))\right], \;\;\; j\le n.
 \end{array}
 $$
Introduce the functions
$$
S_j(t)=\left\{
\begin{array}{rl}
 \displaystyle
c_j(0,1,t)\sum\limits_{k=1}^{n}|p_{jk}(\omega_j(0,1,t))| &\mbox{for}\ 1\le j\le m,\\
 \displaystyle c_j(1,0,t)\sum\limits_{k=1}^{n}|p_{jk}(\omega_j(1,0,t))| &\mbox{for}\ m<j\le n.
\end{array}
\right.
$$
A non-resonance condition analogous to \reff{f14} can be 
stated as
\beq \label{f12sff99}
\max_{j\le n}\max_{t\in \R} S_j(t)<1.
 \ee
Using the strategy of the proof of Theorem \ref{thm:th12}, let us show that under the condition \reff{f12sff99} the system
\beq\label{sds}
\begin{array}{cc}
u_j(x,t)=
\displaystyle c_j(x_j,x,t)\left[(1-x_j)\sum_{k=1}^{n}p_{jk}(\omega_j(0))u_k(1-x_k,\omega_j(0))\right.
 \\ 
 +\displaystyle \left.x_j\sum_{k=1}^{n}p_{jk}(\omega_j(1))u_k(1-x_k,\omega_j(1))\right], \;\;\; j\le n
\end{array}
\ee
is uniquely solvable in $C_{n,2\pi}$ with respect to $u_{j}, j\le n$. Putting $x=0$ for $m< j\le n$ and $x=1$ for $j\le m$ in \reff{sds}, we get the following system of $n$ equations with respect to $n$ unknowns $u_j(0,t)$, $m<j\le n$ and $u_j(1,t)$, $j\le m$:

\beq \label{lsl}
\begin{array}{cc}
u_j(0,t)=
\displaystyle c_j(1,0,t)\sum_{k=1}^{n}p_{jk}(\omega_j(1,0,t))u_k(1-x_k,\omega_j(1,0,t)), \;\;\; m<j\le n,
 \\ [3mm]
u_j(1,t)=
\displaystyle c_j(0,1,t)\sum_{k=1}^{n}p_{jk}(\omega_j(0,1,t))u_k(1-x_k,\omega_j(0,1,t)), \;\;\; 1\le j\le m.
\end{array}
\ee
Notice that the unique solvability of \reff{lsl} in $C_{n,2\pi}(\R)$ entails the  unique solvability of
\reff{sds} in $C_{n,2\pi}$. From \reff{lsl} we have
\beq\label{dfdfdf}
\begin{array}{cc}
\max\limits_{m<i\le n}\max\limits_{j\le m}\max\limits_{t,\tau\in\R}\left\{|u_i(0,t)|, |u_j(1,\tau)|\right\}
\\ [3mm]
\le\displaystyle \max_{t,\tau\in\R}\left\{
\max_{j\le m}c_j(0,1,\tau)\sum_{k=1}^{n}|p_{jk}(\omega_j(0,1,\tau))||u_k(1-x_k,\omega_j(0,1,\tau))|\right., 
\\ [3mm]
\left.\displaystyle \max_{m<j\le n}c_j(1,0,t)\sum_{k=1}^{n}|p_{jk}(\omega_j(1,0,t))||u_k(1-x_k,\omega_j(1,0,t))|\right\}
\\ [3mm]
\le\max\limits_{j\le n}\max\limits_{t\in\R}S_j(t)\max\limits_{k\le m}\max\limits_{m<i\le n}\max\limits_{t,\tau\in\R}\{|u_i(0,t)|, |u_k(1,\tau)|\}.
\end{array}
\ee
Using \reff{dfdfdf} and applying the Banach fixed-point argument to \reff{lsl}, we conclude that \reff{f12sff99} ensures the unique solvability of \reff{sds}, as desired. 

We now show, in addition to \reff{f12sff99}, another sufficient non-resonance condition.
To this end, we change the variable 
$t$ to $\tau=\om_j(1,0,t)$ for $m<j\le n$ and
$t$ to $\tau=\om_j(0,1,t)$
for $j\le m$ in \reff{lsl}. 
This allows us to rewrite the system \reff{lsl} as follows:
\beq \label{lsl1}
\begin{array}{rcll} 
\displaystyle u_j(0,\omega_j(1,0,\tau))&=&\displaystyle
\displaystyle c_j(1,0,\omega_j(1,0,\tau))
 \sum_{k=1}^{n}p_{jk}(\tau)u_k(1-x_k,\tau), \quad& m<j\le n,
 \\ [3mm]
\displaystyle u_j(1,\omega_j(0,1,\tau))&=&\displaystyle
 c_j(0,1,\omega_j(0,1,\tau))\sum_{k=1}^{n}p_{jk}(\tau)u_k(1-x_k,\tau),\quad& j\le m.
\end{array}
\ee
Set $v(t)=(u_1(1,t),...,u_m(1,t),u_{m+1}(0,t),...,u_{n}(0,t))$ and rewrite \reff{lsl1} in the operator-matrix form
$$
(Gv)(t)=Q(t)v(t),
$$
where the operator $G\in\mathcal{L}(C_{n,2\pi}(\R))$ is given by
$$
(Gv)(t)=(u_1(1,\omega_j(0,1,\tau)),...,u_m(1,\omega_j(0,1,\tau)),u_{m+1}(0,\omega_j(1,0,\tau)),...,u_{n}(0,\omega_j(1,0,\tau)))
$$
and the matrix $Q(t)$ is defined by the right-hand side of \reff{lsl1}. Assume that the matrix $Q(t)$ is invertible for all $t\in\R$, and, moreover,
\begin{equation}\label{Q12}
\|Q^{-1}(t)\|_{\infty}<1.
\end{equation}
Then the system \reff{lsl1} and, hence, the system \reff{sds} is uniquely solvable. This means that 
\reff{Q12} is, additionally to \reff{f12sff99},   a non-resonance condition for 
the problem \reff{f1}, \reff{f42}. 

To illustrate applicability of these two non-resonance conditions, suppose that 
the  coefficients $a_j$, $b_{jj}$, and $p_{jk}$ are constant. In this case the condition \reff{f12sff99} 
is simplified to
$$
\exp{\left\{\frac{(-1)^{1-x_j}b_{jj}}{a_j}\right\}}\sum_{k=1}^{n}|p_{jk}|<1\quad \mbox{ for all }
j\le n.
$$
The matrix $Q$ in this case does not depend on  $t$ and reads
$$
Q=\left(
\begin{array}{ccc}
p_{11}\exp\left\{- \frac{b_{11}}{a_1}\right\}& \ldots & p_{1n}\exp{\left\{-\frac{b_{11}}{a_1}\right\}}\\
\vdots & \ddots & \vdots\\
p_{m1}\exp{\left\{-\frac{b_{mm}}{a_m}\right\}} & \ldots & p_{mn}\exp{\left\{-\frac{b_{mm}}{a_m}\right\}}\\
p_{m+1,1}\exp{\left\{\frac{b_{m+1,m+1}}{a_{m+1}}\right\}} & \ldots & p_{m+1,n}\exp{\left\{\frac{b_{m+1,m+1}}{a_{m+1}}\right\}}
\\
\vdots & \ddots & \vdots\\
p_{n1}\exp{\left\{\frac{b_{nn}}{a_n}\right\}} & \ldots & p_{nn}\exp{\left\{\frac{b_{nn}}{a_n}\right\}}
\end{array}
\right).
$$
If $Q$ is invertible and the norm of $Q^{-1}$ is less than one, then we meet our second non-resonance condition
 \reff{Q12}.

\section*{Acknowledgments}
The second author was supported by the BMU-MID Erasmus Mundus Action 2 grant. He expresses his gratitude to the Applied Analysis group at the Humboldt University of Berlin for its kind hospitality.

\end{document}